\documentclass[11pt,a4paper]{article}

\usepackage[left=4cm, top=3.5cm,bottom=4cm,right=4cm]{geometry}

\usepackage{graphicx}
\usepackage{latexsym}
\usepackage{amsbsy}
\usepackage{amsthm}
\usepackage{amssymb}
\usepackage{amsfonts}
\usepackage{dsfont}
\usepackage{amsmath}

\newtheorem{theorem}{Theorem}[section]

\unitlength0.1cm

\begin{document}

\title{\bfseries The Duality of the Volumes and the Numbers of Vertices of Random Polytopes}

\author{Christian Buchta}

\date{}

\maketitle

\begin{abstract}
\noindent
An identity due to Efron dating from 1965 relates the expected volume of the convex hull of $n$ random points to the expected number of vertices of the convex hull of $n+1$ random points. Forty years later this identity was extended from expected values to higher moments. The generalized identity has attracted considerable interest. Whereas the left-hand side of the generalized identity---concerning the volume---has an immediate geometric interpretation, this is not the case for the right-hand side---concerning the number of vertices. A transformation of the right-hand side applying an identity for elementary symmetric polynomials overcomes the blemish. The arising formula reveals a duality between the volumes and the numbers of vertices of random polytopes.
	\noindent
	\bigskip
	\\
	{\bf Keywords}. Random polytope, convex hull, moment, duality, elementary symmetric polynomial.\\
	{\bf MSC 2010}. 60D05, 52A22.
\end{abstract}

\section{Introduction}
\label{intro}
Write $\mathcal{K}^{d}$ for the set of all convex bodies (convex compact sets with non-empty interiors) in $\mathbb{R}^{d}$. Fix $K\in \mathcal{K}^{d}$, and choose points  $x_{1},\dots , x_{n}\in K$ randomly, independently, and according to the uniform distribution on $K$. The set $K_{n}= \textrm{conv}\left\lbrace x_{1},\dots , x_{n}\right\rbrace $ is a random polytope. Denote the volume of $K_{n}$ by the random variable $V_{n}$ and the number 
of vertices of $K_{n}$ by the random variable $N_{n}$.
\vskip 0.5truecm
Efron \cite{RefE}, p.\ 335, Formulae (3.5) and (3.7), proved the identity 
\begin{equation} \label{eq:1}
\frac{EV_{n}}{\textrm{vol}\;K}=1-\frac{E N_{n+1}}{n+1}.
\end{equation}
The left-hand side has an immediate geometric interpretation: Assume that $n$ points have been chosen at random. Then $EV_{n}/\textrm{vol}\;K$ is the probability that a further point chosen at random falls into the convex hull of the $n$ points, which means that it is not a vertex of the convex hull of the $n+1$ points.

Likewise, the right-hand side can be interpreted geometrically: The ratio $EN_{n+1}/(n+1)$ is the probability that any of $n+1$ points chosen at random is a vertex of their convex hull and hence not contained in the convex hull of the remaining $n$ points.

In \cite{RefBu}, p.\ 127, Formula (1.8), Efron's identity  (\ref{eq:1}) was extended to higher moments:
\begin{equation} \label{eq:2}
\frac{EV^{k}_{n}}{(\textrm{vol}\;K)^{k}} = E \prod_{i=1}^{k} \left( 1-\frac{N_{n+k}}{n+i}\right).
\end{equation}

This result has attracted considerable interest; see in particular Subsection 8.2.3 in the book by Schneider and Weil \cite{RefSW}, the survey articles (chapters of books) by Reitzner \cite{RefR3}, Hug \cite{RefH}, and Schneider \cite{RefS2}, as well as the research articles by Cowan \cite{RefCo}, Groeneboom \cite{RefG}, Beermann and Reitzner \cite{RefBeR}, and Kabluchko, Last, and Zaporozhets \cite{RefKLZ}. The identity (\ref{eq:2}) is also mentioned in a survey paper (book chapter) by Calka \cite{RefCa} and in research papers by Reitzner \cite{RefR1}, Kabluchko, Marynych, Temesvari, and Th\"{a}le \cite{RefKMTT}, as well as Brunel \cite{RefBr}. A consequence of the identity is stated in a survey article (book chapter) by B\'ar\'any \cite{RefBa1}, in a further survey article by B\'ar\'any \cite{RefBa2}, and in a survey article---different from the one mentioned above---by Schneider \cite{RefS1}, as well as in research articles by Reitzner \cite{RefR2} and B\'ar\'any and Reitzner \cite{RefBaR1,RefBaR2}.

The extension of the geometric interpretation of the left-hand side of (\ref{eq:1}) to the left-hand side of (\ref{eq:2}) is straightforward: Assume again that $n$ points have been chosen at random. Then  $EV_{n}^{k}/(\textrm{vol}\;K)^{k}$ is the probability that each of $k$ further points chosen at random falls into the convex hull of the $n$ points, which means that none of the specified $k$ points among the $n+k$ points chosen at random is a vertex of the convex hull of the $n+k$ points.

The right-hand side of (\ref{eq:2}) has no obvious geometric meaning which explains that the right-hand side is equivalent to the left-hand side. Writing it in the form
\begin{equation*}
\frac{(-1)^{k}n!}{(n+k)!} E \prod_{i=1}^{k} \left( N_{n+k}-(n+i)\right)
\end{equation*}
we identify $ \prod_{i=1}^{k} \left( N_{n+k}-(n+i)\right)$ as the generating function of the elementary symmetric polynomials 
$\sigma_{j}(n+1,\ldots,n+k),\;j=0,\ldots,k$, in the variables $n+1,\ldots,n+k$. It will turn out in the proof of Theorem \ref{theorem1} that decomposing these polynomials in a suitable way gives rise to a transformation of (\ref{eq:2}) into
\begin{equation} \label{eq:3}
\frac{EV_{n}^{k}}{(\textrm{vol}\;K)^{k}}=
\sum\limits_{j=0}^{k} (-1)^{j} {k\choose j} \frac{E(N_{n+k})_{(j)}}{(n+k)_{(j)}},
\end{equation}
where $E(N_{n+k})_{(j)}=EN_{n+k}(N_{n+k}-1)\cdots(N_{n+k}-j+1)$ is the $j$-th factorial moment of $N_{n+k}$ and $(n+k)_{(j)}=(n+k)(n+k-1)\cdots(n+k-j+1)=\frac{(n+k)!}{(n+k-j)!}$.

The ratio $E(N_{n+k})_{(j)}/(n+k)_{(j)}$ has a simple geometric interpretation: It is just the probability that $j$ specified points in a set of $n+k$ random points are vertices of the convex hull of the $n+k$ points. Consequently, by the inclusion--exclusion principle,
\begin{equation*}
\sum\limits_{j=1}^{k} (-1)^{j-1} {k\choose j} \frac{E(N_{n+k})_{(j)}}{(n+k)_{(j)}}
\end{equation*}
is the probability that at least one of $k$ specified points among $n+k$ points chosen at random is a vertex of the convex hull of the $n+k$ points. The complementary probability, i.e.\ the probability that none of the $k$ specified points is a vertex of the convex hull of the $n+k$ points, is just given by the right-hand side of (\ref{eq:3}).

Trivially, (\ref{eq:1}) is equivalent to 
\begin{equation*} 
\frac{EN_{n+1}}{n+1}=1-\frac{EV_{n}}{\textrm{vol}\;K}.
\end{equation*}
We prove that, more generally, the identity
\begin{equation} \label{eq:4}
\frac{E(N_{n+k})_{(k)}}{(n+k)_{(k)}}=\sum\limits_{j=0}^{k} (-1)^{j} {k\choose j} \frac{EV^{j}_{n+k-j}}{(\textrm{vol}\;K)^{j}}
\end{equation}
is dual to the identity (\ref{eq:3}) in the sense that (\ref{eq:3}) implies (\ref{eq:4}), and (\ref{eq:4}) implies (\ref{eq:3}).

\section{Transformation Based on the Decomposition of Certain Elementary Symmetric Polynomials}
\label{sec:1}
Write $\sigma_{j}(x_{1},\dots , x_{k})$ for the $j$-th elementary symmetric polynomial in the variables $x_{1},\dots , x_{k}$, i.e.
\begin{equation*} 
\sigma_{j}(x_{1},\dots , x_{k})=\sum_{1\leq i_{1}<\ldots <i_{j}\leq k}x_{i_{1}}\cdots x_{i_{j}}.
\end{equation*}
The elementary symmetric polynomials generalize the binomial coefficients: $\sigma_{j}(x_{1},\dots , x_{k})= {k\choose j}$ if $x_{1}=\ldots=x_{k}=1$. Correspondingly, the generating function of the binomial coefficients
\begin{equation*} 
(t-1)^{k}=\sum\limits_{j=0}^{k} (-1)^{j} {k\choose j}t^{k-j}
\end{equation*}
extends to 
\begin{equation*}
\prod_{i=1}^{k}(t-x_{i})=\sum\limits_{j=0}^{k}(-1)^{j}\sigma_{j}(x_{1},\ldots,x_{k})t^{k-j},
\end{equation*}
and the recurrence relation ${k\choose j}={k-1\choose j-1}+{k-1\choose j}$ extends to 
\begin{equation} \label{eq:5}
\sigma_{j}(x_{1},\dots , x_{k})=x_{k}\sigma_{j-1}(x_{1},\dots , x_{k-1})+\sigma_{j}(x_{1},\dots , x_{k-1}).
\end{equation}
The generating function and the recurrence relation for the binomial coefficients suggest to define ${k\choose j}=1$ if $j=0$ and ${k\choose j}=0$ if $j\neq0$ and $k<j$. Likewise, the generating function and the recurrence relation for the elementary symmetric polynomials suggest to define $\sigma_{j}(x_{1},\dots , x_{k})=1$ if $j=0$ and $\sigma_{j}(x_{1},\dots , x_{k})=0$ if $j\neq0$ and $k<j$. Here $j$ and $k$ are also allowed to be negative.

To transform the product on the right-hand side of (\ref{eq:2}), each of the elementary symmetric polynomials $\sigma_{j}(n+1,\dots, n+k),\;j=0,\ldots,k$, in the variables $n+1,\ldots, n+k$, which occur there implicitly, is now decomposed into a sum of products of elementary symmetric polynomials such that the first factor is an elementary symmetric polynomial in just the integers $1,2,\ldots,$ whereas the second factor is the elementary symmetric polynomial of maximal degree, i.e.\ just the product of all variables.\vskip0.25cm\noindent
\textbf{Proposition.} \textit{Denote by $\sigma_{j}(x_{1},\dots , x_{k})$ the $j$-th elementary symmetric polynomial in the $k$ variables $x_{1},\dots , x_{k}$. Then}
\begin{equation*} 
\sigma_{j}(x+1,\dots , x+k)= \sum\limits_{i=0}^{j}{k\choose i} \sigma_{j-i}(1,\dots , k-i-1)\sigma_{i}(x+1,\dots , x+i).
\end{equation*}

\begin{proof} 
The formula is correct if $k=1$. Assume that it has been verified for elementary symmetric polynomials in less than $k$ variables. Applying the recurrence relation (\ref{eq:5}) we obtain
\begin{align*}
\sigma_{j}&(x+1,\dots , x+k)\\
= &(x+k)\sum\limits_{i=0}^{j-1}{k-1\choose i}\sigma_{j-i-1}(1,\dots , k-i-2)\sigma_{i}(x+1,\dots , x+i)\\
&+\sum\limits_{i=0}^{j}{k-1\choose i} \sigma_{j-i}(1,\dots , k-i-2)\sigma_{i}(x+1,\dots , x+i).
       \end{align*}
Splitting the factor $x+k$ by which the first sum is multiplied into $x+i+1$ and $k-i-1$, observing that $(x+i+1)\sigma_{i}(x+1,\dots , x+i)=\sigma_{i+1}(x+1,\dots , x+i+1)$, and adapting the summation index, we get
\begin{align*}
\sigma_{j}&(x+1,\dots , x+k)\\
= &\sum\limits_{i=1}^{j}{k-1\choose i-1} \sigma_{j-i}(1,\dots , k-i-1)\sigma_{i}(x+1,\dots , x+i)\\
&+\sum\limits_{i=0}^{j-1}{k-1\choose i}(k-i-1)\sigma_{j-i-1}(1,\dots , k-i-2)\sigma_{i}(x+1,\dots , x+i)\\
&+\sum\limits_{i=0}^{j}{k-1\choose i}\sigma_{j-i}(1,\dots , k-i-2)\sigma_{i}(x+1,\dots , x+i).
 \end{align*}
If the index is taken from $0$ to $j$ in all three sums, only terms are added which have the value zero. According to (\ref{eq:5}),
\begin{equation*} 
(k-i-1)\sigma_{j-i-1}(1,\dots , k-i-2)+\sigma_{j-i}(1,\dots , k-i-2)=\sigma_{j-i}(1,\dots , k-i-1),
\end{equation*}
and as 
${k-1\choose i-1}+{k-1\choose i}={k\choose i}$, the claimed formula arises.
\end{proof}
\begin{theorem}
\label{theorem1}
Let $K\in \mathcal{K}^{d},\,n\in\mathbb{N}$, and $k\in\mathbb{N}$. Then
\begin{equation*} 
\frac{EV_{n}^{k}}{(\textnormal{vol}\;K)^{k}}=\sum\limits_{j=0}^{k}(-1)^{j}{k\choose j}\frac{E(N_{n+k})_{(j)}}{(n+k)_{(j)}},
\end{equation*}
where $E(N_{n+k})_{(j)}=EN_{n+k}(N_{n+k}-1)\cdots(N_{n+k}-j+1)$ is the $j$-th factorial moment of $N_{n+k}$ and $(n+k)_{(j)}=(n+k)(n+k-1)\cdots(n+k-j+1)$.
\end{theorem}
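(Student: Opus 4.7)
The plan is to start from the established identity (\ref{eq:2}) and transform its right-hand side in three passes: first by the generating function identity for elementary symmetric polynomials, then by the Proposition just proved, and then by a second application of the same generating function identity. First I would clear the denominators using $\prod_{i=1}^{k}(n+i)=(n+k)_{(k)}$, obtaining
\[
E\prod_{i=1}^{k}\!\left(1-\frac{N_{n+k}}{n+i}\right) = \frac{(-1)^{k}}{(n+k)_{(k)}}\,E\prod_{i=1}^{k}\bigl(N_{n+k}-(n+i)\bigr).
\]
Expanding $\prod_{i=1}^{k}(N_{n+k}-(n+i))=\sum_{j=0}^{k}(-1)^{j}\sigma_{j}(n+1,\ldots,n+k)N_{n+k}^{k-j}$ and then invoking the Proposition (with $x=n$) converts the inner product into a double sum over $i,j$ whose summand contains the factors $\binom{k}{i}$, $\sigma_{j-i}(1,\ldots,k-i-1)$, $(n+1)(n+2)\cdots(n+i)$, and $(-1)^{j}N_{n+k}^{k-j}$.

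I would then swap the order of summation so that $i$ is outer and $m:=j-i$ is inner. The inner sum thereby becomes $\sum_{m=0}^{k-i}(-1)^{m}\sigma_{m}(1,\ldots,k-i-1)N_{n+k}^{k-i-m}$, which is itself an instance of the generating function identity and therefore equals $N_{n+k}\prod_{\ell=1}^{k-i-1}(N_{n+k}-\ell)=(N_{n+k})_{(k-i)}$, a falling factorial of $N_{n+k}$. The degenerate case $i=k$ yields an empty product equal to $1=(N_{n+k})_{(0)}$, consistent with the conventions $\sigma_{0}=1$ and $\sigma_{m}(x_{1},\ldots,x_{\ell})=0$ for $m>\ell$ fixed earlier in the paper.

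After this inner evaluation, reindexing via $j:=k-i$ and using $(n+1)(n+2)\cdots(n+k-j)=(n+k)_{(k)}/(n+k)_{(j)}$ to cancel the global prefactor yields the polynomial identity
\[
\prod_{i=1}^{k}\!\left(1-\frac{N_{n+k}}{n+i}\right)=\sum_{j=0}^{k}(-1)^{j}\binom{k}{j}\frac{(N_{n+k})_{(j)}}{(n+k)_{(j)}},
\]
and taking expectations closes the argument.

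I expect the main obstacle to be the recognition step in the second paragraph: one must see that, after swapping, the inner sum in $m$ exactly fits the template of the generating function identity with auxiliary variables $1,2,\ldots,k-i-1$ and a leading factor $N_{n+k}$, so that it collapses to a falling factorial rather than an opaque polynomial in $N_{n+k}$. Keeping the index bounds and the extremal cases $i=k$ and $m=k-i$ straight requires some care, but once the inner sum is identified the remaining arithmetic is routine cancellation against the global prefactor.
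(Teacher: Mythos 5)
Your proposal is correct and follows essentially the same route as the paper: starting from the moment identity (\ref{eq:2}), expanding via the generating function of the elementary symmetric polynomials, applying the Proposition, interchanging the order of summation, and collapsing the inner sum to a falling factorial of $N_{n+k}$. The only cosmetic difference is that you justify the collapse of the inner sum by a second explicit appeal to the generating function identity, which the paper states without comment; the index bookkeeping and the degenerate cases you flag all check out.
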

\begin{proof} 
We start from Theorem 1 in \cite{RefBu}, which states that
\begin{equation*}
\frac{EV_{n}^{k}}{(\textrm{vol}\;K)^{k}}= E\prod_{i=1}^{k}\left( 1-\frac{N_{n+k}}{n+i}\right).
\end{equation*}
The right-hand side can equivalently be written in the form
\begin{equation*} 
\frac{n!}{(n+k)!}\sum\limits_{j=0}^{k}(-1)^{k-j}\sigma_{j}(n+1,\dots , n+k)EN^{k-j}_{n+k}.
\end{equation*}
The Proposition transforms this into
\begin{equation*}
\frac{n!}{(n+k)!}\sum\limits_{j=0}^{k}(-1)^{k-j}\sum\limits_{i=0}^{j}{k\choose i}\sigma_{j-i}(1,\dots ,k-i-1)\sigma_{i}(n+1,\dots ,n+i)EN^{k-j}_{n+k}.
       \end{equation*}
Interchanging the order of summation yields
\begin{equation*}
\frac{n!}{(n+k)!}\sum\limits_{i=0}^{k}{k\choose i} \sigma_{i}(n+1,\dots , n+i)\sum\limits_{j=i}^{k}(-1)^{k-j}\sigma_{j-i}(1,\dots,k-i-1)EN^{k-j}_{n+k}.
       \end{equation*}
Since
\begin{equation*} 
\sum\limits_{j=i}^{k}(-1)^{k-j}\sigma_{j-i}(1,\dots ,k-i-1)EN^{k-j}_{n+k}=(-1)^{k-i}E(N_{n+k})_{(k-i)}
\end{equation*}
and
\begin{equation*} 
\frac{n!}{(n+k)!}\sigma_{i}(n+1,\dots , n+i)=\frac{1}{(n+k)_{(k-i)}},
\end{equation*}
we obtain
\begin{equation*} 
\sum\limits_{i=0}^{k}(-1)^{k-i}{k\choose i}\frac{E(N_{n+k})_{(k-i)}}{(n+k)_{(k-i)}},
\end{equation*}
which is identical with the claimed expression.
\end{proof}
The subsequent theorem provides a geometric interpretation of the $j$-th factorial moment of $N_{n+k}\;(j=0,\ldots,k)$ occurring in Theorem \ref{theorem1}. We write $m$ instead of $n+k-j$ hoping to facilitate understanding.

\begin{theorem}
\label{theorem2}
Let $K\in \mathcal{K}^{d},\,m\in\mathbb{N}$, and $j\in\mathbb{N}$. Then the probability that $j$ points distributed independently and uniformly in $K$ are vertices of the convex hull of these $j$ and $m$ further points distributed independently and uniformly in $K$ is given by
\begin{equation*} 
\frac{E(N_{m+j})_{(j)}}{(m+j)_{(j)}},
\end{equation*}
where $E(N_{m+j})_{(j)}$ denotes the $j$-th factorial moment of $N_{m+j}$  and $(m+j)_{(j)}=(m+j)(m+j-1)\cdots(m+1)$.
\end{theorem}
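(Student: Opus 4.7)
The plan is to prove the identity by a standard double-counting / exchangeability argument: express the falling factorial $(N_{m+j})_{(j)}$ as the number of ordered $j$-tuples of distinct vertices of the convex hull, and then interpret this sum via indicators over all ordered $j$-tuples of the $m+j$ sample points.

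More precisely, I would write the $m+j$ random points as $X_1,\dots,X_{m+j}$, i.i.d.\ uniform on $K$. For each ordered $j$-tuple $(i_1,\dots,i_j)$ of distinct indices in $\{1,\dots,m+j\}$, let $A_{i_1,\dots,i_j}$ be the event that $X_{i_1},\dots,X_{i_j}$ are all vertices of $\mathrm{conv}\{X_1,\dots,X_{m+j}\}$. Because the points are i.i.d., the probability $\Pr(A_{i_1,\dots,i_j})$ does not depend on the chosen tuple; call this common value $p$. This $p$ is precisely the probability whose formula we want to establish.

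Next I would observe the pointwise identity
\begin{equation*}
(N_{m+j})_{(j)} \;=\; \sum_{(i_1,\dots,i_j)} \mathbf{1}_{A_{i_1,\dots,i_j}},
\end{equation*}
where the sum ranges over all $(m+j)_{(j)}$ ordered $j$-tuples of distinct indices. This is just the statement that choosing an ordered $j$-tuple of distinct vertices from a set of $N_{m+j}$ vertices can be done in exactly $N_{m+j}(N_{m+j}-1)\cdots(N_{m+j}-j+1)$ ways. Taking expectations and using exchangeability yields
\begin{equation*}
E(N_{m+j})_{(j)} \;=\; (m+j)_{(j)}\, p,
\end{equation*}
and dividing by $(m+j)_{(j)}$ gives the claim.

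There is no real obstacle here; the only minor point worth a sentence in the write-up is that almost surely no two of the $X_i$ coincide and the sample points are in general position (so each vertex of the convex hull corresponds to exactly one of the $X_i$), which is why the counting identity for $(N_{m+j})_{(j)}$ is valid with probability one. This justifies passing to expectations without correction terms.
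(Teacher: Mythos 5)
Your argument is correct and is essentially the paper's own proof: both rest on exchangeability of the i.i.d.\ points together with the double-counting identity that the number of $j$-tuples of sample points which are all vertices equals the number of $j$-tuples drawn from the $N_{m+j}$ vertices. The only cosmetic difference is that you count ordered tuples, obtaining $E(N_{m+j})_{(j)}=(m+j)_{(j)}\,p$ directly, while the paper counts unordered $j$-subsets and writes the ratio as $E\binom{N_{m+j}}{j}\big/\binom{m+j}{j}$ before converting to factorial moments; these differ only by a factor of $j!$ in numerator and denominator.
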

\begin{proof} 
 On the one hand, for any $j$ out of $m+j$ points distributed independently and uniformly in $K$, the probability of being vertices of the convex hull of the $m+j$ points is the same. Hence, as there are ${m+j\choose j}$ possibilities to choose $j$ points out of $m+j$ points, ${m+j\choose j}$  times this probability gives the expected number of possibilities to choose $j$ points out of the $m+j$ points such that the chosen $j$ points are vertices of the convex hull of the $m+j$ points.
 
 On the other hand, the number of possibilities to choose $j$ points out of $m+j$ points such that the chosen $j$ points are vertices of the convex hull of the $m+j$ points is just the number of possibilities to choose $j$ points out of the vertices of the convex hull, i.e.\ ${N_{m+j}\choose j}$. Thus the expected number of possibilities is  $E{N_{m+j}\choose j}$.
 
Combining these two observations, we find that 
\begin{equation*} 
 \frac{E{N_{m+j}\choose j}}{{{m+j}\choose j}}=\frac{E(N_{m+j})_{(j)}}{(m+j)_{(j)}}
\end{equation*}
is just the probability in question.
\end{proof}

\section{Duality}
\label{sec:2}
\begin{theorem}[dual version of Theorem \ref{theorem1}]
\label{theorem3}
Let $K\in\mathcal{K}^{d}$, $n\in\mathbb{N}$, and $k\in\mathbb{N}$. Then
\begin{equation*} 
 \frac{E(N_{n+k})_{(k)}}{(n+k)_{(k)}}=\sum\limits_{j=0}^{k}(-1)^{j}{k\choose j}\frac{EV^{j}_{n+k-j}}{(\textnormal{vol}\;K)^{j}},
\end{equation*}
where $E(N_{n+k})_{(k)}=EN_{n+k}(N_{n+k}-1)\cdots(N_{n+k}-k+1)$ is the $k$-th factorial moment of $N_{n+k}$ and $(n+k)_{(k)}=(n+k)(n+k-1)\cdots(n+1)$.
\end{theorem}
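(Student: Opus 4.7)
The strategy is to derive Theorem \ref{theorem3} from Theorem \ref{theorem1} by binomial inversion. Theorem \ref{theorem1}, applied with first argument $n+k-j$ and second argument $j$, expresses $EV_{n+k-j}^{j}/(\textnormal{vol}\,K)^{j}$ as a linear combination of the normalized factorial moments $E(N_{n+k})_{(\ell)}/(n+k)_{(\ell)}$. The crucial observation is that the random variable on the right is always $N_{n+k}$: the total $(n+k-j)+j$ does not depend on $j$, so every term in the right-hand side of Theorem \ref{theorem3} refers to the same underlying point set. This is exactly the situation in which the orthogonality relation for binomial coefficients yields an inversion.

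First I would substitute the identity
\[
\frac{EV_{n+k-j}^{j}}{(\textnormal{vol}\,K)^{j}} \;=\; \sum_{\ell=0}^{j}(-1)^{\ell}\binom{j}{\ell}\frac{E(N_{n+k})_{(\ell)}}{(n+k)_{(\ell)}}
\]
(which is Theorem \ref{theorem1} with $n\mapsto n+k-j$ and $k\mapsto j$) into the right-hand side of Theorem \ref{theorem3} and interchange the order of summation to obtain
\[
\sum_{\ell=0}^{k}(-1)^{\ell}\frac{E(N_{n+k})_{(\ell)}}{(n+k)_{(\ell)}}\sum_{j=\ell}^{k}(-1)^{j}\binom{k}{j}\binom{j}{\ell}.
\]

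Next I would simplify the inner sum using $\binom{k}{j}\binom{j}{\ell}=\binom{k}{\ell}\binom{k-\ell}{j-\ell}$, which converts it to $(-1)^{\ell}\binom{k}{\ell}(1-1)^{k-\ell}$; this vanishes unless $\ell=k$, and equals $(-1)^{k}$ in that case. Hence only the term $\ell=k$ survives, and the right-hand side collapses to $E(N_{n+k})_{(k)}/(n+k)_{(k)}$, as claimed. The converse implication (that Theorem \ref{theorem3} implies Theorem \ref{theorem1}) follows from the same calculation, since binomial inversion is an involution, which justifies calling the two identities \emph{dual}.

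There is no substantive obstacle: once the reindexing $(n,k)\mapsto(n+k-j,\,j)$ is spotted, the proof reduces to the standard finite-difference identity $\sum_{j=\ell}^{k}(-1)^{j-\ell}\binom{k}{j}\binom{j}{\ell}=\delta_{k\ell}$. The only point requiring care is the index bookkeeping — in particular, checking that the total sample size $n+k$ is constant along the diagonal of the double sum — and this is precisely the structural feature that makes the duality hold.
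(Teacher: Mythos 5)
Your proof is correct and is essentially the paper's own argument: the paper packages the same computation as the statement that the lower-triangular matrix $A_{k}=\left((-1)^{j}\binom{i}{j}\right)$ is an involution, and your evaluation of the inner sum $\sum_{j=\ell}^{k}(-1)^{j-\ell}\binom{k}{j}\binom{j}{\ell}=\delta_{k\ell}$ is exactly the entrywise verification of $A_{k}^{2}=I$ in the row $i=k$. The observation that the total sample size $(n+k-j)+j=n+k$ is constant under the reindexing is likewise the paper's key structural step, so nothing further is needed.
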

\begin{proof} 
Theorem \ref{theorem1}, with $n$ and $k$ replaced by $n+k-i$ and $i\;(i=0,\ldots,k)$, can be written in the form $\textbf{v}^{(k)}_{n}=A_{k}\textbf{n}^{(k)}_{n}$, where
\begin{equation*} 
\textbf{v}^{(k)}_{n}=\left( 1,\frac{EV_{n+k-1}}{\textrm{vol}\;K},\frac{EV_{n+k-2}^{2}}{(\textrm{vol}\;K)^{2}},\ldots,\frac{EV^{k}_{n}}{(\textrm{vol}\;K)^{k}}\right)^{T}
\end{equation*}
and
\begin{equation*} 
\textbf{n}^{(k)}_{n}=\left( 1,\frac{EN_{n+k}}{n+k},\frac{E(N_{n+k})_{(2)}}{(n+k)_{(2)}},\ldots,\frac{E(N_{n+k})_{(k)}}{(n+k)_{(k)}}\right)^{T}
\end{equation*}
are $(k+1)$-vectors and 
\begin{equation*} 
A_{k}=\left( (-1)^{j}{i\choose j}\right) _{i=0,\ldots,k;\;j=0,\ldots,k}
\end{equation*}
is a $(k+1)\times(k+1)$-matrix.

The inverse of $A_{k}$ is $A_{k}$ itself. To see this, consider
\begin{equation*} 
A_{k}^{2}=\left( (-1)^{j}{i\choose j}\right) _{i=0,\ldots,k;\;j=0,\ldots,k}\;\left( (-1)^{l}{j\choose l}\right) _{j=0,\ldots,k;\;l=0,\ldots,k}.
\end{equation*}
The entries in the $i$-th row of the first matrix are zero if $j>i$, and the entries in the $l$-th column of the second matrix are zero if $j<l$. Consequently, if $i<l$, the inner product of the $i$-th row of the first matrix with the $l$-th column of the second matrix is zero. If $i\geq l$, the inner product of the $i$-th row of the first matrix with the $l$-th column of the second matrix is given by
\begin{equation*}
\sum\limits_{j=l}^{i}(-1)^{j+l}{i\choose j}{j\choose l}={i\choose l}\sum\limits_{j=l}^{i}(-1)^{j+l}{i-l\choose j-l}={i\choose l} \sum\limits_{j=0}^{i-l}(-1)^{j}{i-l\choose j},
       \end{equation*}
which is one if $i=l$ and zero if $i>l$.

Hence $\textbf{v}^{(k)}_{n}=A_{k}\textbf{n}^{(k)}_{n}$ implies $\textbf{n}^{(k)}_{n}=A_{k}\textbf{v}^{(k)}_{n}\!$, proving Theorem \ref{theorem3}. 
\end{proof}

Finally we point out that, likewise,  $\textbf{n}^{(k)}_{n}=A_{k}\textbf{v}^{(k)}_{n}$ implies $\textbf{v}^{(k)}_{n}=A_{k}\textbf{n}^{(k)}_{n}\!$, proving the duality of Theorems \ref{theorem1} and \ref{theorem3}.

\section{Concluding Remark}
\label{sec:3}
Alternatively, one could start with Theorem \ref{theorem2}, the proof of which is based on the same approach as the proof of Theorem 1 in \cite{RefBu}. As described in the Introduction, the inclusion--exclusion principle then implies the right-hand side of (\ref{eq:3}), and the Proposition verifies the equivalence of the right-hand side of (\ref{eq:2}) and the right-hand side of (\ref{eq:3}).

\vspace{1cm}

\footnotesize
\noindent
Department of Mathematics, Salzburg University, Austria \\
\texttt{christian.buchta@plus.ac.at}

\end{document}